\documentclass[oneside, a4paper, final]{amsart}

\usepackage[british]{babel}
\usepackage[english=british]{csquotes}
\usepackage{tikz-cd}
\usepackage{amsmath}
\usepackage{amssymb}
\usepackage{amsthm}
\usepackage{mathtools}
\usepackage{mathrsfs}
\usepackage{cleveref}
\usepackage{amsrefs}
\usepackage{microtype}

\newcommand{\bundle}{TM/F} 
\newcommand{\bundleConnection}{\nabla^{\bundle}} 
\newcommand{\tractor}[2]{\begin{pmatrix} {#1} \\ {#2} \end{pmatrix}}
\DeclareFontEncoding{LS1}{}{}
\DeclareFontSubstitution{LS1}{stix2}{m}{n}
\DeclareSymbolFont{symbols2}{LS1}{stix2frak}{m}{n}
\DeclareMathSymbol{\oplusrhrim}{\mathbin}{symbols2}{"FE}
\DeclareMathSymbol{\opluslhrim}{\mathbin}{symbols2}{"FD}
\DeclareSymbolFont{yhlargesymbols}{OMX}{yhex}{m}{n} 
\DeclareMathAccent{\yhwidehat}{\mathord}{yhlargesymbols}{"62}
\numberwithin{equation}{section}

\theoremstyle{plain}
\newtheorem{theorem}{Theorem}[section]
\newtheorem{lemma}{Lemma}[section]

\theoremstyle{remark}
\newtheorem*{remark}{Remark}
\theoremstyle{definition}
\newtheorem{definition}{Definition}[section]

\title[Elementary Relative Tractor Calculus\ldots]{Elementary Relative Tractor Calculus \\ for Legendrean Contact Structures}
\author[M.\ A.\ Wasilewicz]{Michał Andrzej Wasilewicz}
\address{Faculty of Mathematics, University of Vienna, Oskar-Morgenstern-Platz 1, 1090 Vienna, Austria}
\email{michal.wasilewicz@univie.ac.at}
\subjclass[2020]{53D12}
\keywords{parabolic geometries; relative BGG conctruction; relative tractor calculus; Legendrean contact structures; Lagrangean contact structures; invariant differential operators; partial connections}

\begin{document}
	\begin{abstract}
		For a manifold $M$ endowed with a Legendrean (or Lagrangean) contact structure $E\oplus F \subset TM$, we give an elementary construction of an invariant partial connection on the quotient bundle $TM/F$. This permits us to develop a na\"{i}ve version of relative tractor calculus and to construct a second order invariant differential operator, which turns out to be  the first relative BGG operator induced by the partial connection.
	\end{abstract}
	\maketitle
	
	\section{Introduction}
	In his pioneering paper \cite{lagrangeanContact}, Masaru Takeuchi introduced a notion of Lagrangean contact structures modelled on projectivised cotangent bundles. He showed that a choice of a projective class on a manifold gives rise to a refinement of the canonical contact structure. More precisely, he observed that given an $m$-dimensional manifold $M$, $m \geq 2$, the canonical contact structure $H$ on the projectivised cotangent bundle $\pi: \mathbb{P}(T^{*}M)\rightarrow M$ admits a decomposition into two Lagrangian subbundles $\ker(T\pi) \oplus E$, where the subbundle $E$ is  determined uniquely by a choice of a projective class. This follows from the fact that certain horizontal lifts agree for projectively equivalent connections. Hence, the bundle $E$ spanned on those lifts does not depend on a choice of a representative from the class (see \cite{lagrangeanContact}*{Lemma 4.2}).
	
	In the final part of his paper, M.\ Takeuchi carried out a rather arduous investigation into Cartan connections associated to Lagrangean contact structures. However, dealing with objects like second jet prolongations is highly non-trivial, and the modern approach of encoding the structure into parabolic geometries is preferred. The advantage is that the language of parabolic geometries provides a unified set of tools on the interface of differential geometry and representation theory. 
	
	We approach the study of Lagrangean contact structures in a more general setting than that of \cite{lagrangeanContact}. We consider an abstract contact structure admitting a splitting into two Legendrian subbundles $E\oplus F$ of equal rank. In particular, we do not impose any involutivity condition on either of the Legendrian subbundles. Of our main interest is a quotient of the tangent bundle by one of its Legendrian subbundles, say $F$. The crucial observation is that the resulting bundle $\bundle$ works well with the relative BGG machinery, which was developed in a sequence of papers \citelist{\cite{relativeBGG1} \cite{relativeBGG2}} by Andreas \v{C}ap and Vladim\'{\i}r Sou\v{c}ek. In the language of \cite{relativeBGG2}, the bundle $\bundle$ is an example of a relative tractor bundle. This is a completely novel approach and such a treatment has not appeared in the literature so far.
	
	This work consists of two main parts. The first part focuses on the study of distinguished partial connections on $\bundle$. This is mostly done by understanding the effect of a change of a contact form on the various data associated to it. Rather than referring to the results of the general theory, throughout the text we will gradually develop a simplified yet standalone version of so-called relative tractor calculus. The main result of this section is \Cref{thm:TractorConnection}, in which we derive a well-defined partial connection on $\bundle$. In the second part, we use the results of the previous section to construct first- and second-order invariant differential operators on completely reducible bundles. In particular, we give an elementary construction of the first relative BGG operator associated to the relative tractor bundle $\bundle$.
	
	Finally, it should be mentioned there is something of a problem of a nomenclature in this area. In the beginning, authors referred to a decomposition of a contact structure into two maximally $\mathcal{L}$-isotropic subbundles as Lagrangean contact structure.  On the other hand, in symplectic geometry a Lagrangian submanifold is the deep-rooted name for a maximal isotropic submanifold. Nowadays, we can observe an increasing tendency to use the term Legendrean contact structure instead of Lagrangean contact structure. 
	
	\section{Elementary relative tractor calculus}
	Let $M$ be a $(2n+1)$-dimensional $\mathscr{C}^{\infty}$-manifold endowed with a contact structure $(M, H)$. We write $TM = H \oplusrhrim Q$ for the quotient bundle $Q\coloneqq TM/H$, so that the Levi bracket induces a map $\mathcal{L}: \Lambda^{2}H \rightarrow Q$ with $\mathcal{L}(\eta, \xi) = \pi_{Q}([\eta, \xi])$ for $\eta, \xi \in \Gamma(H)$.
	
	\begin{definition}\label{def:legendreanContactStructure}
		A splitting of the contact subbundle into a direct sum of two rank-$n$ subbundles $H = E \oplus F$ such that $\mathcal{L}{|}_{E\times E} = 0$ and $\mathcal{L}{|}_{F\times F} = 0$, is called a \emph{Legendrean contact structure}.
	\end{definition}
	
	Notice that having chosen any of the two rank-$n$ subbundles of $H$, we can use it to form a quotient of the tangent bundle. More precisely, taking the bundle $F$ (resp.\ $E$) we obtain $\bundle\rightarrow M$ (resp.\ $TM/E \rightarrow M$). \Cref{def:legendreanContactStructure} does not impose any additional conditions on either Legendrean subbundle; however, in case both $E$ and $F$ are involutive, we arrive at a double fibration picture and picking one of the two subbundles is equivalent to choosing one side of the fibration. This arbitrariness should not be surprising in light of the relation to the general theory of relative BGG sequences, where the choice is an inherent part of the construction.
	
	Instrumental for the construction of relative tractor calculus is the observation that a choice of a contact form $\theta\in\Omega^{1}(M)$ splits $0\rightarrow H \rightarrow TM \rightarrow Q \rightarrow 0$, so the bundle $\bundle$ then decomposes into a direct sum $Q \oplus E$. Moreover, since $\operatorname{ker}(\theta) = H$, the contact form descends to the quotient bundle $Q$. In practice, it is also convenient to introduce a vector field $r$, called the \emph{Reeb vector field}, which is the unique vector field satisfying $\iota_{r}\,\theta = 1$ and $\iota_{r}\,d\theta = 0$. Using this, we are able to explicitly express the bundle maps reversing the arrows  as $\Gamma(Q)\ni\rho \mapsto \theta(\rho)r\in\Gamma(TM)$ and $\Gamma(TM)\ni t \mapsto (t - \theta(t)r) \in\Gamma(H)$. It is clear that being a section of the contact subbundle, $t - \theta(t)r$ can be decomposed corresponding to $H= E\oplus F$ as $(t-\theta(t)r)_{E} + (t-\theta(t)r)_{F}$. Throughout the text, we will follow this convention and denote the projections from $H$ onto $E$ and $F$ by a lower index.
	
	\begin{definition}\label{def:TractorSplitting}
		Given a contact form $\theta$ on $M$ and a section $t$ of $TM$, we define an isomorphism $\Gamma(TM/F) \ni (t + F) \mapsto {(t)}_{\theta} \in \Gamma(Q \oplus E)$ by the formula \[{(t)}_{\theta} \coloneqq \tractor{\pi_{Q}(t)}{\left(t-\theta(t)r\right)_{E}},\]
		where $r$ denotes the Reeb vector field corresponding to the contact form $\theta$.
	\end{definition}
	
	\begin{lemma}\label{lemma:CharacterisationOfReeb}
		Let $\theta$ and $\hat{\theta}$ be two contact forms on $M$ related by $\hat{\theta} = \mathrm{e}^{u}\theta$, where $u\in\mathscr{C}^{\infty}(M)$. Then, the $2$-form $d\theta$ and the Reeb vector field $r$ corresponding to $\theta$ transform in the following way: \[d\hat{\theta} = \mathrm{e}^{u}(du\wedge\theta + d\theta) \quad\textrm{and}\quad \hat{r} = \mathrm{e}^{-u}(r+\Upsilon),\]  where the section $\Upsilon\in\Gamma(H)$  is characterised by the identity $d\theta(\Upsilon, \xi) = du(\xi)$, for all $\xi\in\Gamma(H)$.
	\end{lemma}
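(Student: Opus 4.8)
The plan is to handle the two claimed formulae in turn. For $d\hat\theta$, I would simply apply the Leibniz rule for the exterior derivative to $\hat\theta = \mathrm{e}^{u}\theta$, which gives $d\hat\theta = d(\mathrm{e}^{u})\wedge\theta + \mathrm{e}^{u}\,d\theta = \mathrm{e}^{u}(du\wedge\theta + d\theta)$; nothing further is needed for this part.

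Before turning to $\hat r$, I would record that $\Upsilon$ is well defined: the contact condition means precisely that $d\theta$ restricts to a non-degenerate $2$-form on $H$, so $\xi\mapsto d\theta(\,\cdot\,,\xi){|}_{H}$ is a bundle isomorphism $H\to H^{*}$, and $\Upsilon$ is the unique preimage of $du{|}_{H}$. Two consequences I would extract at once, by feeding $\xi = \Upsilon$ into the defining identity: $\theta(\Upsilon) = 0$ (since $\Upsilon\in\Gamma(H) = \ker\theta$) and $du(\Upsilon) = d\theta(\Upsilon,\Upsilon) = 0$ by antisymmetry.

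I would then verify that $\mathrm{e}^{-u}(r+\Upsilon)$ satisfies the two conditions characterising $\hat r$, namely $\iota_{\hat r}\hat\theta = 1$ and $\iota_{\hat r}\,d\hat\theta = 0$, and conclude by the uniqueness of the Reeb vector field already granted in the text. The first is immediate: $\iota_{\mathrm{e}^{-u}(r+\Upsilon)}(\mathrm{e}^{u}\theta) = \theta(r) + \theta(\Upsilon) = 1 + 0 = 1$. For the second, using the formula for $d\hat\theta$ obtained above the factors $\mathrm{e}^{\pm u}$ cancel, so it suffices to show $\iota_{r+\Upsilon}(du\wedge\theta + d\theta) = 0$. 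Expanding with the standard interior-product identity, $\iota_{r+\Upsilon}(du\wedge\theta) = du(r+\Upsilon)\,\theta - \theta(r+\Upsilon)\,du = du(r)\,\theta - du$ (using $du(\Upsilon)=0$ and $\theta(\Upsilon)=0$), while $\iota_{r+\Upsilon}\,d\theta = \iota_{r}d\theta + \iota_{\Upsilon}d\theta = d\theta(\Upsilon,\,\cdot\,)$ because $\iota_{r}d\theta = 0$.

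The one genuinely non-routine point is that the defining identity for $\Upsilon$ is only stated for arguments in $H$, so I would extend it to arbitrary vector fields. Writing $X = (X-\theta(X)r) + \theta(X)r$ with $X-\theta(X)r\in\Gamma(H)$, and using $d\theta(\Upsilon, r) = -\iota_{r}d\theta(\Upsilon) = 0$, one gets $d\theta(\Upsilon, X) = du(X - \theta(X)r) = du(X) - du(r)\,\theta(X)$, that is, $d\theta(\Upsilon,\,\cdot\,) = du - du(r)\,\theta$ as $1$-forms. Substituting this into the expression above makes every term collapse: $\iota_{r+\Upsilon}(du\wedge\theta + d\theta) = du(r)\,\theta - du + du - du(r)\,\theta = 0$. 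This completes the verification and hence the lemma.
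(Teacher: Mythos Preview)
Your proof is correct. The logical direction differs from the paper's: the paper takes the Reeb field $\hat r$ as given, sets $\Upsilon \coloneqq \mathrm{e}^{u}\hat r - r$, checks that $\Upsilon\in\Gamma(H)$, and then \emph{derives} the identity $d\theta(\Upsilon,\xi)=du(\xi)$ by expanding $0=d\hat\theta(\hat r,\zeta)$; you instead \emph{start} from the identity, use non-degeneracy of $d\theta{|}_{H}$ to define $\Upsilon$, and then verify that $\mathrm{e}^{-u}(r+\Upsilon)$ satisfies the two Reeb conditions. The two arguments are essentially inverses of each other. Your route has the minor advantage of making explicit why $\Upsilon$ is well defined and of isolating cleanly the one non-automatic step (extending $d\theta(\Upsilon,\,\cdot\,)=du$ from $H$ to all of $TM$); the paper's route is a bit shorter since well-definedness of $\Upsilon$ comes for free from its definition as a difference of vector fields.
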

	\begin{proof}
		An explicit computation immediately leads to $d\hat{\theta} = \mathrm{e}^{u}(du\wedge\theta + d\theta)$ and shows that $\mathrm{e}^{u}\hat{r} - r = \Upsilon \in\Gamma(H)$.
		
		Therefore, it only remains to find the expression for $\Upsilon$ in terms of $du$. For that purpose, let us take $\zeta\in\Gamma(TM)$ and consider $d\hat{\theta}(\hat{r}, \zeta)$ which, by the definition of Reeb vector field, vanishes identically
		\begin{align*}
			0 &= d\hat{\theta}(\hat{r}, \zeta) \\ 
			&= \left(du\wedge\theta + d\theta\right)(r + \Upsilon, \zeta) \\
			&= d\theta(\Upsilon, \zeta) + du(r+\Upsilon)\theta(\zeta) - du(\zeta).
		\end{align*}
		Substituting $r$ in place of $\zeta$, we readily see that $du(\Upsilon) = 0$; hence, the last equation simplifies to $-d\theta(\Upsilon, \zeta)= du(r)\theta(\zeta)-du(\zeta)$. Now, taking $\zeta$ to be a section of the bundle $H$ we get that $d\theta(\Upsilon, \zeta) = du(\zeta)$.
	\end{proof}
	
	\begin{lemma}\label{lemma:ChangeOfSplitting}
		Let $\theta$ and $\hat{\theta}$ be two contact forms on $M$ related by $\hat{\theta} = \mathrm{e}^{u}\theta$, where $u\in\mathscr{C}^{\infty}(M)$, and let $\Upsilon\in\Gamma(H)$ be the section from \Cref{lemma:CharacterisationOfReeb}. Then, the images of $t\in\Gamma(\bundle)$ under the isomorphisms $(\,\cdot\,)_{\theta}$ and $(\,\cdot\,)_{\hat{\theta}}$ are related by $$\yhwidehat{(\rho,\, \mu)}{}^{\intercal} = (\rho,\, \mu - \theta(\rho)\Upsilon_{E}){}^{\intercal}.$$
	\end{lemma}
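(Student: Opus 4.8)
The plan is to carry out the change-of-splitting computation directly, feeding in the transformation rules from \Cref{lemma:CharacterisationOfReeb}. Fix a representative $t\in\Gamma(TM)$ of the given section of $\bundle$ and write $(t)_{\theta}=(\rho,\mu)^{\intercal}$, so that $\rho=\pi_{Q}(t)$ and $\mu=(t-\theta(t)r)_{E}$. The top slot of $(t)_{\hat\theta}$ is again $\pi_{Q}(t)=\rho$, so the first component is manifestly unchanged and everything hinges on computing the bottom slot $(t-\hat\theta(t)\hat r)_{E}$.

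For that slot I would substitute $\hat\theta=\mathrm{e}^{u}\theta$ and $\hat r=\mathrm{e}^{-u}(r+\Upsilon)$. The exponential factors cancel, giving $\hat\theta(t)\hat r=\theta(t)(r+\Upsilon)$ and hence $t-\hat\theta(t)\hat r=(t-\theta(t)r)-\theta(t)\Upsilon$. Both summands on the right lie in $\Gamma(H)$ — the first because $\theta(t-\theta(t)r)=0$, the second because $\Upsilon\in\Gamma(H)$ by construction in \Cref{lemma:CharacterisationOfReeb} — so the projection onto $E$ distributes over the difference and produces $(t-\theta(t)r)_{E}-\theta(t)\Upsilon_{E}=\mu-\theta(t)\Upsilon_{E}$.

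It remains only to recognise the coefficient $\theta(t)$ as $\theta(\rho)$, which is legitimate precisely because $\theta$ descends to $Q$ (as $\ker\theta=H$, noted before \Cref{def:TractorSplitting}); thus $\theta(t)=\theta(\pi_{Q}(t))=\theta(\rho)$ and we obtain $(t)_{\hat\theta}=(\rho,\,\mu-\theta(\rho)\Upsilon_{E})^{\intercal}$. I do not expect any real obstacle beyond bookkeeping: the only points deserving a word of care are that $\theta(\rho)$ is well defined on the quotient $Q$, that the $E$-projection may be applied only after checking the relevant vectors sit inside $H$, and — although it is automatic from the formulas for $\pi_{Q}(t)$ and $(t-\theta(t)r)_{E}$ — that the whole expression is independent of the chosen representative $t$, i.e.\ unchanged when $t$ is altered by a section of $F$.
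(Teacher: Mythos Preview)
Your proof is correct and follows essentially the same route as the paper: compute $(t)_{\hat\theta}$ directly, use $\hat\theta(t)\hat r=\theta(t)(r+\Upsilon)$ to cancel the exponential factors, and split off the $E$-component. If anything, you are more explicit than the paper in justifying why the projection onto $E$ distributes over the difference and why $\theta(t)=\theta(\rho)$, but the argument is the same.
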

	\begin{proof}
		Let $t$ be a section of the bundle $\bundle$ and set ${(t)}_{\hat{\theta}} = {(\hat{\rho}, \hat{\mu})}{}^{\intercal}$, for $\rho\in\Gamma(Q)$ and $\mu\in\Gamma(E)$ as in \Cref{def:TractorSplitting}. We need to check how this expression changes when passing to the related contact form $\theta = \mathrm{e}^{-u}\hat{\theta}$.
		
		Since the projection $\pi_{Q}: TM \rightarrow Q$ is a natural operation, we immediately see that $\rho$ remains unchanged. As for $\hat{\mu}$, we compute 
		\begin{align*}
			\hat{\mu} &= (t-\hat{\theta}(t)\hat{r})_{E}\\
			&= \left(t - \theta(t)(r+\Upsilon)\right)_{E} \\
			&= \left(t - \theta(t)r\right)_{E} - \theta(t){\Upsilon}_{E} \\
			&= \mu - \theta(\rho){\Upsilon}_{E}. \qedhere
		\end{align*}
	\end{proof}	
	
	\begin{lemma}
		Let $\theta$ and $\hat{\theta}$ be two contact forms on $M$ related by $\hat{\theta} = \mathrm{e}^{u}\theta$, where $u\in\mathscr{C}^{\infty}(M)$, with $r$ and $\hat{r}$ denoting the corresponding Reeb vector fields. Then, for any section $\xi$ of $F$ we get \[ [\xi, \hat{r}] = \mathrm{e}^{-u}\left([\xi, r+\Upsilon] - du(\xi)(r+\Upsilon)\right).\] In particular, \[ [\xi, \hat{r}]_{E} = \mathrm{e}^{-u}[\xi, r]_{E} + \mathrm{e}^{-u}d\theta(\xi, \Upsilon){\Upsilon}_{E} + \mathrm{e}^{-u}\big([\xi, \Upsilon] - \theta([\xi, \Upsilon])r\big)_{E}.\]
	\end{lemma}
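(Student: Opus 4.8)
The plan is to reduce the whole statement to \Cref{lemma:CharacterisationOfReeb} together with the Leibniz rule for the Lie bracket. Writing $\hat r = \mathrm{e}^{-u}(r+\Upsilon)$ and using $[\xi, fZ] = f[\xi, Z] + (\xi f)\,Z$ with $f = \mathrm{e}^{-u}$ and $Z = r+\Upsilon$, together with $\xi(\mathrm{e}^{-u}) = -\mathrm{e}^{-u}\,du(\xi)$, I immediately obtain
\[
  [\xi,\hat r] = \mathrm{e}^{-u}\bigl([\xi, r+\Upsilon] - du(\xi)(r+\Upsilon)\bigr),
\]
which is the first displayed identity; expanding $[\xi, r+\Upsilon] = [\xi,r] + [\xi,\Upsilon]$ by bilinearity sets up the second part.

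To obtain the $E$-component I first need to know which of the summands on the right lie in $\Gamma(H) = \ker\theta$, since the projection $(\,\cdot\,)_E$ is a priori only defined on sections of $H$. Applying the Cartan formula $\theta([\xi,Z]) = \xi(\theta(Z)) - Z(\theta(\xi)) - d\theta(\xi,Z)$ with $\theta(\xi) = 0$, $\theta(r) = 1$ and $\iota_r d\theta = 0$ shows that $[\xi,r]\in\Gamma(H)$; the same formula gives $\theta([\xi,\Upsilon]) = -d\theta(\xi,\Upsilon) = du(\xi)$, where the last equality is the characterisation of $\Upsilon$ from \Cref{lemma:CharacterisationOfReeb}, so that $[\xi,\Upsilon] - du(\xi)\,r\in\Gamma(H)$. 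Regrouping,
\[
  [\xi,\hat r] = \mathrm{e}^{-u}\Bigl([\xi,r] + \bigl([\xi,\Upsilon] - du(\xi)\,r\bigr) - du(\xi)\,\Upsilon\Bigr),
\]
where each of the three summands in the bracket is a section of $H$; in particular $[\xi,\hat r]\in\Gamma(H)$, so $[\xi,\hat r]_E$ is unambiguously defined (the splitting $H = E\oplus F$ being independent of the contact form).

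Finally I would substitute $du(\xi) = d\theta(\Upsilon,\xi) = -d\theta(\xi,\Upsilon)$ in the third summand and $du(\xi) = \theta([\xi,\Upsilon])$ in the $r$-correction of the second, and then apply the projection $H\to E$ term by term, using $\mathrm{e}^{-u}$-linearity; this produces exactly the asserted formula for $[\xi,\hat r]_E$. I do not expect a genuine obstacle, but the step requiring the most care is precisely this bookkeeping: keeping track of which vectors fail to lie in $H$ and using the sign convention $d\theta(\xi,\Upsilon) = -du(\xi)$ consistently, so that the $r$-corrections appearing inside the two $E$-projected terms come out with the right coefficients.
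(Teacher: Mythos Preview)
Your proposal is correct and follows essentially the same route as the paper: expand $[\xi,\hat r]$ via the Leibniz rule using $\hat r = \mathrm{e}^{-u}(r+\Upsilon)$, then rewrite $-du(\xi)$ as $d\theta(\xi,\Upsilon)$ via \Cref{lemma:CharacterisationOfReeb}, observe that $[\xi,r]\in\Gamma(H)$ and that $\theta([\xi,\Upsilon])r$ is the $Q$-part of $[\xi,\Upsilon]$, and project the resulting $H$-sections onto $E$. Your explicit use of the Cartan formula to justify $[\xi,r]\in\Gamma(H)$ and $\theta([\xi,\Upsilon]) = -d\theta(\xi,\Upsilon)$ is a bit more detailed than the paper, which simply recalls these facts, but the argument is otherwise identical.
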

	\begin{proof}
		A quick computation shows that
		\begin{align*}
			[\xi, \hat{r}] &= [\xi, \mathrm{e}^{-u}(r+\Upsilon)] \\
			&= \mathrm{e}^{-u}[\xi, r+\Upsilon] - \mathrm{e}^{-u}du(\xi)(r+\Upsilon) \\
			&= \mathrm{e}^{-u}\left([\xi, r+\Upsilon] + d\theta(\xi, \Upsilon)(r+\Upsilon)\right),
		\end{align*}
		which proves the first part of the statement. To see that the $E$-component of the Lie bracket has the desired form, recall that $[\zeta, r]$ is a section of $H$ for any $\zeta\in\Gamma(H)$. This, together with the fact that $\theta([\xi, \Upsilon])r = -d\theta(\xi, \Upsilon)r$ is precisely the $Q$-component of $[\xi, \Upsilon]$, proves the remaining part of the statement.
	\end{proof}
	
	The last crucial ingredient needed to complete the construction of relative tractor calculus are distinguished connections for Legendrean contact structures associated to a choice of a contact form. For our purposes, it is sufficient that we characterise the connections on the bundles $E$ and $Q$ in $F$-directions only. Of course, we require that those connections respect the Legendrean contact structure on the manifold. 
	
	Let $\theta$ be a contact form on $M$. Since the Reeb vector field $r$ associated to $\theta$ trivialises the bundle $Q$, we can define a partial connection on the line bundle $Q$ in the standard way. Explicitly, the connection  $\nabla^{Q}: \Gamma(F) \otimes \Gamma(Q) \rightarrow \Gamma(Q)$ is given by the formula \[\nabla_{\xi}^{Q} \rho = \big(\xi\cdot\theta(\rho)\big)\pi_{Q}(r). \]  
	
	As for the subbundle $E$, the general formulae for the distinguished connections can be found in literature (see \cite{parabolicGeometries}*{Proposition 5.2.14}). However, in the sequel we will give an ad-hoc description of the partial connection $\nabla^{E}: \Gamma(F) \otimes \Gamma(E) \rightarrow \Gamma(E)$ without referring to the general theory.
	
	\begin{lemma}\label{lemma:TranformationConnection}
		Let $\theta$ be a contact form on $M$ and $r$ the corresponding Reeb vector field. Moreover, let $\xi_{1}, \xi_{2}$ be two sections of the bundle $F$, $\eta$ a section of $E$, and $\rho$ any section of the bundle $Q$.
		\begin{enumerate}
			\item The following formula defines a partial connection $\nabla^{E}: \Gamma(F) \otimes \Gamma(E) \rightarrow \Gamma(E)$ \[d\theta(\nabla^{E}_{\xi_{1}}\eta, \xi_{2}) = d\theta([\xi_{1}, \eta], \xi_{2}).\]
			\item  Upon a change of a contact form $\hat{\theta}=\mathrm{e}^{u}\theta$, where $u\in\mathscr{C}^{\infty}(M)$, the partial connections on bundles $E$ and $Q$ transform in the following manner 
			\begin{align*}
				\widehat{\nabla}^{E}_{\xi_{1}}\eta &= {\nabla}^{E}_{\xi_{1}}\eta + d\theta(\xi_{1}, \eta){\Upsilon}_{E}\\ 
				\widehat{\nabla}^{Q}_{\xi_{1}}\rho &= {\nabla}^{Q}_{\xi_{1}}\rho + du(\xi_{1})\rho.
			\end{align*}
		\end{enumerate}
	\end{lemma}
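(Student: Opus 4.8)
The plan is to establish the two assertions in turn: part~(1), where the real work lies, is to show that the implicit prescription genuinely determines a section of $E$ and defines a partial connection; part~(2) is then a straightforward transformation computation.

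For part~(1), the structural input is that $d\theta$ restricts to a non-degenerate $2$-form on $H$ for which, by \Cref{def:legendreanContactStructure}, both $E$ and $F$ are Lagrangean; hence $d\theta$ induces a perfect pairing between $E$ and $F$, i.e.\ $\eta' \mapsto d\theta(\eta',\,\cdot\,)|_{\Gamma(F)}$ is a $\mathscr{C}^{\infty}(M)$-linear isomorphism $\Gamma(E) \xrightarrow{\sim} \Gamma(F)^{*}$. Since $\iota_{r}\,d\theta = 0$, the right-hand side of the displayed identity sees only the $H$-component of $[\xi_{1},\eta]$, and it is $\mathscr{C}^{\infty}(M)$-linear in $\xi_{2}$; testing against a local frame of $F$ therefore exhibits a unique smooth section $\nabla^{E}_{\xi_{1}}\eta \in \Gamma(E)$ satisfying it. To see this is a partial connection, I use $[f\xi_{1},\eta] = f[\xi_{1},\eta] - (\eta\cdot f)\xi_{1}$, whose correction term contributes $(\eta\cdot f)\,d\theta(\xi_{1},\xi_{2}) = 0$ because $F$ is Lagrangean, giving $\mathscr{C}^{\infty}(M)$-linearity in $\xi_{1}$; and $[\xi_{1},f\eta] = f[\xi_{1},\eta] + (\xi_{1}\cdot f)\eta$, whose extra term $(\xi_{1}\cdot f)\,d\theta(\eta,\xi_{2})$ is precisely the value the pairing assigns to $(\xi_{1}\cdot f)\eta$, giving the Leibniz rule. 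Uniqueness of the solution then promotes these tested equalities to $\nabla^{E}_{f\xi_{1}}\eta = f\nabla^{E}_{\xi_{1}}\eta$ and $\nabla^{E}_{\xi_{1}}(f\eta) = f\nabla^{E}_{\xi_{1}}\eta + (\xi_{1}\cdot f)\eta$ in $\Gamma(E)$.

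For part~(2), consider first the bundle $E$. Substituting $d\hat{\theta} = \mathrm{e}^{u}(du\wedge\theta + d\theta)$ from \Cref{lemma:CharacterisationOfReeb} into the defining identity for $\widehat{\nabla}^{E}$: on the left every argument of $du\wedge\theta$ lies in $H = \ker\theta$, so that term vanishes and only $\mathrm{e}^{u}\,d\theta(\widehat{\nabla}^{E}_{\xi_{1}}\eta,\xi_{2})$ remains; on the right, $\theta(\xi_{2}) = 0$ together with the standard identity $\theta([\xi_{1},\eta]) = -d\theta(\xi_{1},\eta)$ (valid since $\xi_{1},\eta \in \ker\theta$) collapses the wedge term to $du(\xi_{2})\,d\theta(\xi_{1},\eta)$. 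After cancelling $\mathrm{e}^{u}$ and observing $du(\xi_{2}) = d\theta(\Upsilon,\xi_{2}) = d\theta(\Upsilon_{E},\xi_{2})$---the characterisation of $\Upsilon$, then Lagrangeanity of $F$ to discard $\Upsilon_{F}$---one reaches $d\theta(\widehat{\nabla}^{E}_{\xi_{1}}\eta,\xi_{2}) = d\theta\big(\nabla^{E}_{\xi_{1}}\eta + d\theta(\xi_{1},\eta)\Upsilon_{E},\,\xi_{2}\big)$, and non-degeneracy of the pairing finishes this case. For $Q$: since $\hat{\theta}$ descends to $Q$ with $\hat{\theta}|_{Q} = \mathrm{e}^{u}\theta|_{Q}$, and $\hat{r} = \mathrm{e}^{-u}(r + \Upsilon)$ with $\Upsilon \in \Gamma(H) = \ker\pi_{Q}$, one has $\pi_{Q}(\hat{r}) = \mathrm{e}^{-u}\pi_{Q}(r)$ and $\xi\cdot\hat{\theta}(\rho) = \mathrm{e}^{u}\big(du(\xi)\theta(\rho) + \xi\cdot\theta(\rho)\big)$; the exponentials cancel in the product, and $\theta(\rho)\pi_{Q}(r) = \pi_{Q}(\theta(\rho)r) = \rho$ (as $\rho \mapsto \theta(\rho)r$ splits $\pi_{Q}$) yields $\widehat{\nabla}^{Q}_{\xi}\rho = \nabla^{Q}_{\xi}\rho + du(\xi)\rho$.

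None of the individual steps is hard; the only thing demanding attention is the bookkeeping---tracking which vectors lie in $H$ so that $du\wedge\theta$-terms may be dropped and Lie brackets reduced modulo $\mathbb{R}r$, and invoking the Lagrangeanity of $F$ exactly where the pairing of $F$ with itself must vanish. The one genuinely structural ingredient, underlying both the well-definedness in~(1) and the passage from $\xi_{2}$-tested identities to equalities of sections in~(2), is the identification $\Gamma(E) \cong \Gamma(F)^{*}$ afforded by $d\theta$.
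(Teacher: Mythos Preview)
Your proof is correct and follows essentially the same approach as the paper: nondegeneracy of the $E$--$F$ pairing via $d\theta$ for well-definedness in~(1), and direct substitution of the transformation laws for $d\hat\theta$ and $\hat r$ in~(2). The only cosmetic difference is that the paper phrases the $\nabla^{E}$ transformation computation in terms of the Levi bracket $\mathcal{L}$ and $\pi_{Q}(\hat r)$ rather than working with $d\hat\theta$ directly, but the content is identical.
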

	\begin{proof}\leavevmode
		\begin{enumerate}
			\item First of all, let us notice that nondegeneracy of $d\theta$ guarantees that the equation always completely determines the values of $\nabla^{E}$ in $F$-directions.
			
			Next, let us check that the right-hand side of the equation defines a partial connection. For $f\in\mathscr{C}^{\infty}(M, \mathbb{R})$ we get $d\theta([\xi_{1}, f\eta], \xi_{2}) = fd\theta([\xi_{1}, \eta], \xi_{2}) + (\xi_{1}\cdot f)d\theta(\eta, \xi_{2})$; hence, $\nabla^{E}_{\xi_{1}}f\eta = f\nabla_{\xi_{1}}^{E}\eta + (\xi_{1}\cdot f)\eta$. Furthermore, we see that vanishing of $d\theta$ on $\Lambda^{2}F$ implies that the expression is linear over $\mathscr{C}^{\infty}(M, \mathbb{R})$ in $\xi_{1}$, so the linearity conditions are indeed satisfied.
			
			\item We start with the connection $\nabla^{E}$ on $E$.
			\begin{align*}
				\mathcal{L}(\widehat{\nabla}^{E}_{\xi_{1}}\eta, \xi_{2}) &= -d\hat{\theta}([\xi_{1}, \eta], \xi_{2})\pi_{Q}(\hat{r})\\
				&= -\mathrm{e}^{u}\left(du\wedge\theta + d\theta\right)\!([\xi_{1}, \eta], \xi_{2})\,\pi_{Q}(\mathrm{e}^{-u}(r+\Upsilon)) \\
				&= -d\theta([\xi_{1}, \eta], \xi_{2})\pi_{Q}(r)+du(\xi_{2})\theta([\xi_{1}, \eta])\pi_{Q}(r) \\
				&= \mathcal{L}([\xi_{1}, \eta], \xi_{2}) + d\theta(\Upsilon, \xi_{2})d\theta(\xi_{1}, \eta)\pi_{Q}(r) \\
				&= \mathcal{L}(\nabla^{E}_{\xi_{1}}\eta, \xi_{2}) + \mathcal{L}(d\theta(\xi_{1}, \eta)\Upsilon, \xi_{2}),
			\end{align*}
			
			As for the connection $\nabla^{Q}$ on $Q$, a straightforward computation gives the desired result 
			\begin{align*}
				\widehat{\nabla}^{Q}_{\xi_{1}}\rho &= \big(\xi_{1}\cdot\hat{\theta}(\rho)\big)\pi_{Q}(\hat{r}) \\
				&= \mathrm{e}^{u}\big(du(\xi_{1})\theta(\rho) + \xi_{1}\cdot\theta(\rho)\big)\pi_{Q}(\mathrm{e}^{-u}(r+\Upsilon)) \\
				&= \nabla^{Q}_{\xi_{1}}\rho + d\theta(\Upsilon, \xi_{1})\rho.\qedhere
			\end{align*}
		\end{enumerate}
	\end{proof}

	\begin{remark}
		Notice that the formula for $\nabla^{E}$ from \Cref{lemma:TranformationConnection} can be rewritten explicitly as $\nabla^{E}_{\xi}{\eta} = ([\xi, \eta] - \theta([\xi, \eta])r)_{E}$. Indeed, since a choice of a contact form $\theta$ trivialises the quotient bundle $Q$, we can subtract from $[\xi, \eta]$ its projection onto $Q$, $\pi_{Q}([\xi, \eta]) =  \theta([\xi, \eta])r$. The resulting section of the contact subbundle can be decomposed, and by leaving out the $F$-component we arrive at the required formula.
	\end{remark}
	
	\begin{theorem}\label{thm:TractorConnection}
		Let let $\theta\in\Omega^{1}(M)$ be a contact form on $M$ and $r$ the corresponding Reeb vector field. Moreover, let $t \in \Gamma(\bundle)$, so that ${(t)}_{\theta}  = {(\rho, \mu)}^{\intercal}$ for a certain $\rho\in\Gamma(Q)$ and a certain $\mu\in\Gamma(E)$. Then, for any $\xi\in\Gamma(F)$ the formula 
		\begin{equation}\label{eq:TractorConnection}
			{\left(\bundleConnection_{\xi}t\right)}_{\theta}  \coloneqq  \tractor{\nabla^{Q}_{\xi}\rho + \mathcal{L}(\xi, \mu)}{\nabla^{E}_{\xi}\mu + \theta(\rho)[\xi, r]_{E}}
		\end{equation}
		gives rise to a well-defined partial connection $\bundleConnection$ on  $\bundle$ in $F$-directions.
	\end{theorem}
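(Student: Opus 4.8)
The statement packages two things: that the right‑hand side of \eqref{eq:TractorConnection}, although written through a choice of $\theta$, represents one and the same section of $\bundle$ for every contact form, and that the operator so obtained is a partial connection in $F$-directions. The plan is to dispose of the algebraic properties first and then to prove the independence of $\theta$, which is the real content.

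For the partial‑connection axioms I fix $\theta$ and argue slot by slot. Since $(\,\cdot\,)_{\theta}$ is a bundle isomorphism, $(ft)_{\theta}=(f\rho,\,f\mu)^{\intercal}$ for $f\in\mathscr{C}^{\infty}(M)$; inserting this into \eqref{eq:TractorConnection} and using the Leibniz rules for $\nabla^{Q}$ and $\nabla^{E}$, together with the $\mathscr{C}^{\infty}(M)$-linearity of $\mathcal{L}(\xi,\,\cdot\,)$ and of multiplication by $\theta(\rho)$, all the correction terms assemble into $(\xi\!\cdot\! f)(\rho,\mu)^{\intercal}=\bigl((\xi\!\cdot\! f)t\bigr)_{\theta}$, so that $\bundleConnection_{\xi}(ft)=f\,\bundleConnection_{\xi}t+(\xi\!\cdot\! f)\,t$. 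For $\mathscr{C}^{\infty}(M)$-linearity in $\xi$ the only summand that is not obviously tensorial is $\theta(\rho)[\xi,r]_{E}$; but $[f\xi,r]_{E}=f[\xi,r]_{E}-(r\!\cdot\! f)\xi_{E}=f[\xi,r]_{E}$, because $\xi\in\Gamma(F)$ forces $\xi_{E}=0$. Additivity and $\mathbb{R}$-linearity are immediate.

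For independence of $\theta$ I take $\hat{\theta}=\mathrm{e}^{u}\theta$ and compare the two prescriptions. By \Cref{lemma:ChangeOfSplitting} the $\hat{\theta}$-data of $t$ is $(\rho,\,\mu-\theta(\rho)\Upsilon_{E})^{\intercal}$; applying the same lemma to the section $\bundleConnection_{\xi}t$ itself — whose $\theta$-data is the pair $(A,B)^{\intercal}$ of entries of \eqref{eq:TractorConnection} — shows that its $\hat{\theta}$-data must be $(A,\,B-\theta(A)\Upsilon_{E})^{\intercal}$. So the task is to evaluate \eqref{eq:TractorConnection} with the hatted ingredients, namely with $\hat{r}=\mathrm{e}^{-u}(r+\Upsilon)$ from \Cref{lemma:CharacterisationOfReeb}, with $\widehat{\nabla}^{Q}$ and $\widehat{\nabla}^{E}$, and with bottom component $\mu-\theta(\rho)\Upsilon_{E}$, and to check that the two entries come out as $A$ and $B-\theta(A)\Upsilon_{E}$. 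For the top entry I substitute $\widehat{\nabla}^{Q}_{\xi}\rho=\nabla^{Q}_{\xi}\rho+du(\xi)\rho$ from \Cref{lemma:TranformationConnection}(2) and $\mathcal{L}(\xi,\mu-\theta(\rho)\Upsilon_{E})=\mathcal{L}(\xi,\mu)-\theta(\rho)\mathcal{L}(\xi,\Upsilon_{E})$, then rewrite using $\rho=\theta(\rho)\pi_{Q}(r)$, $du(\xi)=d\theta(\Upsilon,\xi)$ and $\mathcal{L}(\xi,\Upsilon_{E})=-d\theta(\xi,\Upsilon_{E})\pi_{Q}(r)$; the discrepancy collapses to $\theta(\rho)\,d\theta(\Upsilon_{F},\xi)\,\pi_{Q}(r)$, which is zero since $\xi,\Upsilon_{F}\in\Gamma(F)$ and $d\theta$ annihilates $\Lambda^{2}F$, so the top entry is unchanged. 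For the bottom entry I expand $\widehat{\nabla}^{E}_{\xi}(\mu-\theta(\rho)\Upsilon_{E})$ by the Leibniz rule and \Cref{lemma:TranformationConnection}(2), and $\hat{\theta}(\rho)[\xi,\hat{r}]_{E}$ by \Cref{lemma:CharacterisationOfReeb} and the lemma computing $[\xi,\hat{r}]_{E}$; the plan is to see that the $\nabla^{E}_{\xi}\Upsilon_{E}$-terms cancel and that, once more by $d\theta(\Upsilon_{F},\xi)=0$, the remaining terms collapse to $\bigl(d\theta(\xi,\mu)-\xi\!\cdot\!\theta(\rho)\bigr)\Upsilon_{E}$, which is exactly $-\theta(A)\Upsilon_{E}$ because $\theta(A)=\theta(\nabla^{Q}_{\xi}\rho)+\theta(\mathcal{L}(\xi,\mu))=\xi\!\cdot\!\theta(\rho)-d\theta(\xi,\mu)$.

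The step I expect to be the main obstacle is precisely this bottom‑entry reconciliation. It is the only place where the transformation laws for the splitting, for $\nabla^{E}$, for $\nabla^{Q}$ (through $\theta(A)$) and for the Reeb bracket all come into play at once, and the care lies in the term $\bigl([\xi,\Upsilon]-\theta([\xi,\Upsilon])r\bigr)_{E}$ produced by the bracket lemma: I must check that, once it has been combined with the $\nabla^{E}$-correction and with the Leibniz contribution from differentiating $\theta(\rho)\Upsilon_{E}$, no residue involving $\Upsilon$ survives beyond the prescribed $-\theta(A)\Upsilon_{E}$. I would organise the bookkeeping so that every surplus term is visibly a value of $d\theta$ on two sections of $F$, which is what forces it to vanish.
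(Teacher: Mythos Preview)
Your proposal is correct and follows essentially the same route as the paper: verify the change-of-splitting identity $(A,B)^{\intercal}\mapsto(A,\,B-\theta(A)\Upsilon_{E})^{\intercal}$ by plugging the hatted data into \eqref{eq:TractorConnection}, using \Cref{lemma:ChangeOfSplitting}, \Cref{lemma:TranformationConnection}, and the bracket lemma for $[\xi,\hat r]_{E}$, and cancel the $\nabla^{E}_{\xi}\Upsilon_{E}$ term against $\bigl([\xi,\Upsilon]-\theta([\xi,\Upsilon])r\bigr)_{E}$. The only addition relative to the paper is your explicit check of the Leibniz rule and of $\mathscr{C}^{\infty}$-linearity in $\xi$ (via $[f\xi,r]_{E}=f[\xi,r]_{E}$), which the paper leaves implicit.
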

	\begin{proof}
		Let $t$ be as in the statement and fix $\hat{\theta}$ a contact form related to $\theta$ by $\hat{\theta} = \mathrm{e}^{u}\theta$, where $u\in\mathscr{C}^{\infty}(M)$. To prove the theorem we only need to show that the slot-wise expression for $\bundleConnection$ transforms well upon a change of a contact form.
		
		We start by computing the left-hand side of \cref{eq:TractorConnection} in the splitting determined by $\hat{\theta}$; for that purpose, we use the result of \Cref{lemma:ChangeOfSplitting}
		\begin{align*}
			\yhwidehat{\tractor{\nabla^{Q}_{\xi}\rho + \mathcal{L}(\xi, \mu)}{\nabla^{E}_{\xi}\mu +\theta(\rho)[\xi, r]_{E}} }
			&= {\tractor{{\nabla}^{Q}_{\xi}\rho + \mathcal{L}(\xi, \mu)}{\nabla^{E}_{\xi}\mu +\theta(\rho)[\xi, r]_{E} - \theta \left( {\nabla}^{Q}_{\xi}\rho + \mathcal{L}(\xi, \mu) \right)\Upsilon_{E}}}\\
			&= {\tractor{{\nabla}^{Q}_{\xi}\rho + \mathcal{L}(\xi, \mu)}{\nabla^{E}_{\xi}\mu +\theta(\rho)[\xi, r]_{E} - \left(\xi\cdot\theta(\rho) + \theta([\xi, \mu])\right){\Upsilon}_{E}}}.
		\end{align*}
		
		As for the right-hand side of \cref{eq:TractorConnection}, we take ${(t)}_{\hat{\theta}} = (\rho, \mu - \theta(\rho)\Upsilon_{E})^{\intercal}$ and use the results of \Cref{lemma:TranformationConnection} to express ${(\bundleConnection_{\xi}t)}_{\hat{\theta}}$ in terms of un-hatted operators. In order to make the computations easier, we start by writing out the general formula
		\[{\left(\bundleConnection_{\xi} t \right)}_{\widehat{\theta}} = 
		\tractor{\widehat{\nabla}^{Q}_{\xi}\rho + \mathcal{L}(\xi, \mu-\theta(\rho){\Upsilon}_{E})}{\widehat{\nabla}^{E}_{\xi}(\mu-\theta(\rho){\Upsilon}_{E})  +\hat{\theta}(\rho)[\xi,\hat{r}]_{E}}\]
		and compute each slot separately. The upper slot reads
		\begin{align*}
			\widehat{\nabla}^{Q}_{\xi}\rho + \mathcal{L}(\xi, \mu-\theta(\rho){\Upsilon}_{E}) &= \nabla^{Q}_{\xi}\rho + \mathcal{L}(\xi, \mu) + du(\xi)\rho - \mathcal{L}(\xi, \theta(\rho){\Upsilon}_{E}) \\
			&= \nabla^{Q}_{\xi}\rho + \mathcal{L}(\xi, \mu),
		\end{align*}
		while the bottom slot reads
		\begin{align*}
			\widehat{\nabla}^{E}_{\xi}(\mu-\theta(\rho){\Upsilon}_{E})  +\hat{\theta}(\rho)[\xi,\hat{r}]_{E} 
			=& \widehat{\nabla}^{E}_{\xi}\mu - \xi\cdot\theta(\rho)\Upsilon_{E} - \theta(\rho)\widehat{\nabla}^{E}_{\xi}{\Upsilon}_{E} + \theta(\rho)[\xi, r]_{E} \\ 
			&- \theta(\rho)\theta([\xi,\Upsilon]){\Upsilon}_{E} + \theta(\rho)\left([\xi, \Upsilon] - \theta([\xi, \Upsilon])r\right)_{E} \\
			=&\nabla^{E}_{\xi}\mu +\theta(\rho)[\xi, r]_{E} - \left(\xi\cdot\theta(\rho) + \theta([\xi, \mu])\right){\Upsilon}_{E} \\
			&+ \theta(\rho)([\xi, \Upsilon] - \theta([\xi, \Upsilon])r)_{E} - \theta(\rho)\nabla^{E}_{\xi}{\Upsilon}_{E},
		\end{align*}
		where the last line vanishes as $\nabla^{E}_{\xi}{\Upsilon}_{E} = ([\xi, \Upsilon] - \theta([\xi, \Upsilon])r)_{E}$.
	\end{proof}
	
	\begin{remark}
		In case the bundle $F$ is involutive, we know that $\bundle$ is endowed with a so-called \emph{Bott connection} $\nabla^{\mathcal{B}}$. This connection is given by a Lie bracket of a section of $F$ with a lift of a section of $\bundle$. It can be shown that the connections $\nabla^{\mathcal{B}}$ and $\bundleConnection$ coincide.
	\end{remark}
	
	\section{The first relative BGG operator}
	The fact that the bundle $\bundle$ has a nontrivial composition series is the main motivation for the sequel. The aim of this section is to construct a second-order invariant differential operator with values in simpler bundles. In the general theory, simple bundles are usually understood to be bundles induced by completely reducible representations. In our setting, these are exactly bundles which arise from $E$, $F$ and $Q$ by tensorial operations.
	
	Recall that non-degeneracy of the Levi bracket $\mathcal{L}: \Lambda^{2}H \rightarrow Q$ entails the existence of a bundle isomorphism $E \rightarrow F^{*}\otimes Q$, given by $\eta \mapsto \mathcal{L}(\eta, \,\cdot\,)$. What is more, since the natural projection $\operatorname{id}_{F^{*}}\otimes\, \pi_{Q}$ maps $F^{*}\otimes \bundle$ to $F^{*}\otimes Q$, we can compose it with the inverse of the isomorphism induced by $\mathcal{L}$ to obtain a natural bundle map from $F^{*}\otimes \bundle$ to $E$. Finally, applying the natural inclusion $E \rightarrow \bundle$ gives rise to a map from $F^{*}\otimes\bundle$ to $\bundle$, which we will henceforth denote by $\partial^{*}$. This notation is very natural as $\partial^{*}$ turns out to be a scalar multiple of the \emph{relative Kostant codifferential} customarily denoted by the same symbol (see \cite{relativeBGG1}*{Section 2.2}). 
	
	In the following, it is important to notice that the isomorphism $F^*\otimes Q\to E$ induced by the Levi bracket $\mathcal{L}$ gives us yet another useful identification. Namely, for any section $\rho$ of $Q$ we can view $\nabla^{Q}\rho$ as a section of $E$.
	
	\begin{lemma}\label{lemma:SplittingOperator}
		For every section $\rho$ of $Q$ there exists a unique section $t$ of $\bundle$ such that $\pi_{Q}(t) = \rho$ and $\partial^{*}\,\bundleConnection t = 0$. This gives rise to a first-order differential operator $\Gamma(Q)\ni \rho \mapsto S(\rho) \in \Gamma(\bundle)$; in a splitting corresponding to $\theta\in\Omega^{1}(M)$ it can be expressed as $S(\rho) = (\rho,\, \nabla^{Q}\rho)^{\intercal}$. 
	\end{lemma}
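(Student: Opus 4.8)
The strategy is to fix an auxiliary contact form $\theta$, rewrite the equation $\partial^{*}\bundleConnection t=0$ in the induced splitting $(\,\cdot\,)_{\theta}$, solve it by hand, and then observe that well-definedness is automatic since every ingredient involved is invariant.

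First I would note that the sections $t\in\Gamma(\bundle)$ with $\pi_{Q}(t)=\rho$ are, in the splitting of \Cref{def:TractorSplitting}, precisely those with ${(t)}_{\theta}={(\rho,\mu)}^{\intercal}$ for an arbitrary $\mu\in\Gamma(E)$. Applying \Cref{thm:TractorConnection}, the upper slot of ${(\bundleConnection_{\xi}t)}_{\theta}$ is $\nabla^{Q}_{\xi}\rho+\mathcal{L}(\xi,\mu)$; equivalently, the image of $\bundleConnection t$ under $\operatorname{id}_{F^{*}}\otimes\pi_{Q}$ is the section $\xi\mapsto\nabla^{Q}_{\xi}\rho+\mathcal{L}(\xi,\mu)$ of $F^{*}\otimes Q$. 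Since $\partial^{*}$ is the composite of $\operatorname{id}_{F^{*}}\otimes\pi_{Q}$ with the inverse of the Levi isomorphism $E\to F^{*}\otimes Q$ and the inclusion $E\hookrightarrow\bundle$ — and the last two maps are injective — the condition $\partial^{*}\bundleConnection t=0$ is equivalent to $\nabla^{Q}_{\xi}\rho+\mathcal{L}(\xi,\mu)=0$ for all $\xi\in\Gamma(F)$.

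Next I would translate this into an identity inside $E$. Viewing $\nabla^{Q}\rho$ as the section of $E$ it corresponds to under the Levi isomorphism, i.e.\ the unique section with $\mathcal{L}(\nabla^{Q}\rho,\xi)=\nabla^{Q}_{\xi}\rho$, and using skew-symmetry $\mathcal{L}(\xi,\mu)=-\mathcal{L}(\mu,\xi)$, the displayed condition becomes $\mathcal{L}(\nabla^{Q}\rho-\mu,\xi)=0$ for all $\xi\in\Gamma(F)$. Non-degeneracy of $\mathcal{L}$ on $E\times F$ — equivalently, injectivity of the isomorphism $E\to F^{*}\otimes Q$ — then forces $\mu=\nabla^{Q}\rho$. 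This yields both existence and uniqueness of $t$, and shows ${(S(\rho))}_{\theta}={(\rho,\nabla^{Q}\rho)}^{\intercal}$.

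Finally, for the operator claims: $S$ is well defined independently of the auxiliary $\theta$ because $\partial^{*}$ is a natural bundle map and $\bundleConnection$ is invariant by \Cref{thm:TractorConnection}, so the unique solution $t$ is intrinsically attached to $\rho$; the displayed formula merely records its shape in the chosen splitting, and, if desired, consistency can be cross-checked against \Cref{lemma:ChangeOfSplitting} together with the transformation rule for $\nabla^{Q}$ from \Cref{lemma:TranformationConnection}. That $S$ is a genuine first-order operator, and not tensorial, follows from the corresponding property of $\nabla^{Q}$: for $f\in\mathscr{C}^{\infty}(M)$ one gets ${(S(f\rho))}_{\theta}={(f\rho,\,f\nabla^{Q}\rho+\iota_{(\cdot)}df\otimes\rho)}^{\intercal}$, with the Leibniz term $\iota_{(\cdot)}df\otimes\rho\in\Gamma(F^{*}\otimes Q)$ identified with a section of $E$. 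I do not expect a genuine obstacle here; the only care needed is keeping the Levi-bracket identification and the sign $\mathcal{L}(\xi,\mu)=-\mathcal{L}(\mu,\xi)$ straight throughout.
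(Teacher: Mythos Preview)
Your proposal is correct and follows essentially the same route as the paper: fix a contact form, note that lifts of $\rho$ are exactly the sections with $(t)_{\theta}=(\rho,\mu)^{\intercal}$, read off the upper slot $\nabla^{Q}_{\xi}\rho+\mathcal{L}(\xi,\mu)$ from \Cref{thm:TractorConnection}, and use the Levi isomorphism $E\cong F^{*}\otimes Q$ to conclude $\mu=\nabla^{Q}\rho$. Your treatment is in fact a bit more careful than the paper's in two places---you spell out why $\partial^{*}\bundleConnection t=0$ reduces to vanishing of the upper slot (the paper abbreviates this as ``$\partial^{*}$ is an isomorphism''), and you add the Leibniz-rule check that $S$ is genuinely first order---but the underlying argument is the same.
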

	\begin{proof}
		First of all, notice that since $\partial^{*}$ is an isomorphism, the condition that $\partial^{*}\,\bundleConnection t$ vanishes for a certain section $t \in\Gamma(\bundle)$ reduces to vanishing of the upper slot of $(\bundleConnection t)_{\theta}$ with respect to all contact forms $\theta$. On the other hand, \Cref{thm:TractorConnection} guarantees that if the upper slot of $(\bundleConnection t)_{\theta}$ vanishes identically for one contact form $\theta$, then it must vanish for all. 
		
		Now, let $\rho$ be as in the statement and let $t$ be any section of $\bundle$ such that $(t)_{\theta} = (\rho, \mu)^{\intercal}$, where $\mu\in\Gamma(E)$. The upper slot of $(\bundleConnection t)_{\theta}$ equals to $\nabla^{Q}\rho + \mathcal{L}(\,\cdot\, , \mu)$. Using the fact that $\mu$ can be seen both as a section of the bundle $E$ and a section of the bundle $F^{*}\otimes Q$, we conclude that $\nabla^{Q}\rho + \mathcal{L}(\,\cdot\, , \mu)$ vanishes if and only if $\mu = \nabla^{Q}\rho$. Thus, the operator $S$ is well-defined and it is of the form $S(\rho) = (\rho,\, \nabla^{Q}\rho)^{\intercal}$.
	\end{proof}
	
	Notice that the defining condition of $S$ implies that the composition $\bundleConnection\circ S$, seen as a map on $Q \otimes F$, has its values in the subbundle $E$ of $\bundle$. On the other hand, via the canonical isomorphism we can identify the bundle $E$ with $F^{*}\otimes Q$.
	
	\begin{definition}
		The second-order invariant differential operator $D \coloneqq \bundleConnection\circ S$ mapping sections of $Q$ to sections of  $F^{*}\otimes F^{*}\otimes Q$ is known as the \emph{first relative BGG operator} (see \cite{relativeBGG2}*{Definition 3.5}).
	\end{definition}
	
	In order to further investigate the properties of $D$, it is convenient to introduce a so-called \emph{Rho-tensor} $\mathrm{P}$. The name emphasises the relation with the usual $\mathrm{P}$-tensor present in the general theory (see \cite{parabolicGeometries}*{Section 5.1.2}).
	
	\begin{lemma}\label{lemma:RhoTensor}
		Let $\theta$ be a contact form on $M$ and $r$ the corresponding Reeb vector field. Then, for $\xi$ a section of $F$, the map \[\xi \mapsto \mathrm{P}(\xi) \coloneqq [\xi, r]_{E}\in\Gamma(E)\] is bilinear over $\mathscr{C}^{\infty}(M)$. Thus, for fixed $\theta$, we obtain a section $\mathrm{P}\in\Gamma(F^{*} \otimes E)$.		
	\end{lemma}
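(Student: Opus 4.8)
The plan is to verify directly that the assignment $\xi\mapsto[\xi, r]_{E}$ is tensorial, i.e.\ linear over $\mathscr{C}^{\infty}(M)$; since additivity is immediate from the $\mathbb{R}$-bilinearity of the Lie bracket, the only thing that genuinely needs an argument is the behaviour under multiplication by a smooth function.

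Before doing that, I would pause to record why the expression $[\xi, r]_{E}$ is meaningful at all, that is, why $[\xi, r]$ is a section of the contact subbundle $H$ whenever $\xi\in\Gamma(F)$. Applying $\theta$ and using $d\theta(\xi, r) = \xi\cdot\theta(r) - r\cdot\theta(\xi) - \theta([\xi, r])$ together with $\theta(r) = 1$, $\theta(\xi) = 0$, and the defining property $\iota_{r}\,d\theta = 0$, one gets $\theta([\xi, r]) = 0$. Hence $[\xi, r]\in\Gamma(H)$ and its projection onto the summand $E$ of $H = E\oplus F$ is well-defined; this is precisely the observation already invoked in the proof of the preceding lemma.

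Now, for $f\in\mathscr{C}^{\infty}(M)$ the Leibniz rule for the Lie bracket gives $[f\xi, r] = f[\xi, r] - (r\cdot f)\xi$. The correction term $(r\cdot f)\xi$ is a section of $F$ and therefore has vanishing $E$-component, so projecting this identity onto $E$ yields $\mathrm{P}(f\xi) = [f\xi, r]_{E} = f[\xi, r]_{E} = f\,\mathrm{P}(\xi)$. Combined with additivity this shows that $\xi\mapsto\mathrm{P}(\xi)$ is $\mathscr{C}^{\infty}(M)$-linear, hence depends only on the pointwise value of $\xi$ and defines a section $\mathrm{P}\in\Gamma(F^{*}\otimes E)$. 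Via the Levi-bracket isomorphism $E\cong F^{*}\otimes Q$ one may equivalently regard $\mathrm{P}$ as the $Q$-valued form $(\xi_{1}, \xi_{2})\mapsto d\theta([\xi_{1}, r], \xi_{2})$ on $F$, which is $\mathscr{C}^{\infty}(M)$-bilinear and explains the terminology.

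There is essentially no obstacle here: the heart of the proof is a single application of the Leibniz rule. The only point that deserves a moment's care — and which I would not skip — is the well-definedness of the $E$-projection, namely the verification that $[\xi, r]$ lands in $H$.
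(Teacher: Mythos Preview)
Your proof is correct and follows essentially the same route as the paper: apply the Leibniz rule to $[f\xi, r]$ and observe that the correction term $(r\cdot f)\xi$ lies in $F$ and hence dies under the $E$-projection. Your added verification that $[\xi, r]\in\Gamma(H)$ and your remark on the bilinear interpretation via $E\cong F^{*}\otimes Q$ are helpful elaborations but do not change the strategy.
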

	\begin{proof}
		Let $\xi$ be as in the statement and take any $f\in\mathscr{C}^{\infty}(M)$. A direct computation shows that
		\begin{align*}
			\mathrm{P}(f\xi) &= [f\xi, r]_{E} \\
			&= (f[\xi, r] - df(r)\xi)_{E} \\
			&= f\,\mathrm{P}(\xi),
		\end{align*}
		where the last equation follows from the fact that $df(r)\xi$ is a section of $F$ and, therefore, vanishes when projected to $E$.
	\end{proof}
	
	Having all necessary ingredients at hand, we can proceed with a computation of the explicit expression for $D$. Recall that  $\nabla^{Q}\rho$ can be seen both as a section of the bundle $E$ and a section of the bundle $F^{*}\otimes Q$. In particular, an expression of the form $\nabla^{E}\nabla^{Q}\rho$ makes sense. Moreover, we can view $\mathrm{P}$ as a section of $F^*\otimes E\cong F^*\otimes F^*\otimes Q$.
	
	\begin{theorem}\label{thm:BggOperator}
		The first relative BGG-operator $D$ has values in the subbundle $\operatorname{Sym}^{2}(F^{*}) \otimes Q$ of $F^{*}\otimes F^{*} \otimes Q$. Moreover, for any contact form $\theta\in\Omega^{1}(M)$ and any $\rho\in\Gamma(Q)$, the operator takes the following form \[D(\rho) = \nabla^{E}\nabla^{Q}\rho + \theta(\rho)\mathrm{P},\] where $\mathrm{P}$ is as in \Cref{lemma:RhoTensor}.
	\end{theorem}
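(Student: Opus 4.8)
The plan is to compute $D(\rho) = \bundleConnection \circ S(\rho)$ directly in a splitting associated to a fixed contact form $\theta$, and then to verify separately that the result lands in the symmetric part. By \Cref{lemma:SplittingOperator} we have $S(\rho) = (\rho,\, \nabla^{Q}\rho)^{\intercal}$, so applying the formula \cref{eq:TractorConnection} from \Cref{thm:TractorConnection} with $\mu = \nabla^{Q}\rho$ gives
\[
{\left(\bundleConnection_{\xi} S(\rho)\right)}_{\theta} = \tractor{\nabla^{Q}_{\xi}\rho + \mathcal{L}(\xi, \nabla^{Q}\rho)}{\nabla^{E}_{\xi}\nabla^{Q}\rho + \theta(\rho)[\xi, r]_{E}}
\]
for $\xi\in\Gamma(F)$. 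The upper slot vanishes identically: this is exactly the defining condition of $S$ (the splitting operator was constructed so that $\partial^{*}\bundleConnection S(\rho) = 0$, equivalently the $Q$-component is zero, since under the identification $E \cong F^{*}\otimes Q$ we have $\mathcal{L}(\,\cdot\,, \nabla^{Q}\rho) = -\nabla^{Q}\rho$). Hence $\bundleConnection S(\rho)$ takes values in the subbundle $E \subset \bundle$, and under the canonical isomorphism $E \cong F^{*}\otimes Q$ its value is the bottom slot, namely $\nabla^{E}_{\xi}\nabla^{Q}\rho + \theta(\rho)[\xi, r]_{E} = \nabla^{E}_{\xi}\nabla^{Q}\rho + \theta(\rho)\mathrm{P}(\xi)$, using the definition of the Rho-tensor from \Cref{lemma:RhoTensor}. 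This is precisely the claimed formula $D(\rho) = \nabla^{E}\nabla^{Q}\rho + \theta(\rho)\mathrm{P}$.

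It remains to show $D(\rho)$ is a section of $\operatorname{Sym}^{2}(F^{*})\otimes Q$, i.e.\ that $D(\rho)(\xi_{1}, \xi_{2})$ is symmetric in $\xi_{1}, \xi_{2}\in\Gamma(F)$. Equivalently, the alternation $D(\rho)(\xi_{1},\xi_{2}) - D(\rho)(\xi_{2},\xi_{1})$ vanishes. I would expand both terms: for the $\theta(\rho)\mathrm{P}$ part one computes the antisymmetrisation of $\mathrm{P}(\xi_{1})(\xi_{2}) - \mathrm{P}(\xi_{2})(\xi_{1})$ using that $\mathrm{P}(\xi)(\cdot)$ means $\mathcal{L}([\xi,r]_{E}, \cdot)$; for the $\nabla^{E}\nabla^{Q}\rho$ part, unwinding the definitions of $\nabla^{Q}$ (as $\nabla^{Q}_{\xi}\rho = (\xi\cdot\theta(\rho))\pi_{Q}(r)$), of $\nabla^{E}$ (via the remark's explicit formula $\nabla^{E}_{\xi}\eta = ([\xi,\eta] - \theta([\xi,\eta])r)_{E}$), and of the identification $E\cong F^{*}\otimes Q$, the antisymmetric part should reduce to terms involving $d\theta$, double Lie brackets of sections of $F$, and the Jacobi identity. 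The fact that $\mathcal{L}|_{F\times F} = 0$ (Legendrean condition) forces the relevant $Q$-components of $[\xi_{1},\xi_{2}]$ to be controlled, and the leftover curvature-type terms should cancel against the antisymmetrisation of $\theta(\rho)\mathrm{P}$ — this is where the precise value of $[\xi,r]_{E}$ enters. Since the operator $D$ is invariant (both $\bundleConnection$ and $S$ are, by \Cref{thm:TractorConnection} and \Cref{lemma:SplittingOperator}), it suffices to carry out this verification for one choice of $\theta$, which keeps the bookkeeping manageable.

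The main obstacle I anticipate is the symmetry verification, not the formula itself. Showing $\nabla^{E}\nabla^{Q}\rho + \theta(\rho)\mathrm{P}$ is symmetric requires carefully tracking how the Levi-bracket identification $E\cong F^{*}\otimes Q$ interacts with $\nabla^{E}$ (i.e.\ whether $\nabla^{E}$ is compatible with $\mathcal{L}$ up to the $\nabla^{Q}$-terms), and then a somewhat delicate Jacobi-identity computation whose antisymmetric residue must exactly match the antisymmetric part of the Rho-tensor. Conceptually this is forced — in the general theory $D$ lands in the relevant BGG-component because of how the Kostant codifferential $\partial^{*}$ interacts with the two-step filtration — so I would, if the direct computation gets unwieldy, instead argue abstractly: the target of the first relative BGG operator is by construction the cohomology bundle $H_{1}$, and identifying $H_{1}$ with $\operatorname{Sym}^{2}(F^{*})\otimes Q$ (the cokernel of the relevant piece of $\partial^{*}$, using $\mathcal{L}|_{F\times F}=0$) shows symmetry for free, leaving only the explicit formula to be read off as above.
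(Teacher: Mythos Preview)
Your derivation of the formula $D(\rho) = \nabla^{E}\nabla^{Q}\rho + \theta(\rho)\mathrm{P}$ matches the paper exactly: apply \cref{eq:TractorConnection} to $S(\rho)=(\rho,\nabla^{Q}\rho)^{\intercal}$, note the upper slot vanishes by construction of $S$, and read off the bottom slot.

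Where you diverge from the paper is in the symmetry argument, and here there is both a methodological difference and a small misconception. You anticipate that the antisymmetric residue of $\nabla^{E}\nabla^{Q}\rho$ will cancel against the antisymmetric part of $\theta(\rho)\mathrm{P}$. In fact the paper shows that \emph{each of the two summands is separately symmetric}: writing $D(\rho)(\xi_{1},\xi_{2}) = -d\theta([\xi_{1},\mu]+\theta(\rho)[\xi_{1},r],\xi_{2})\,\pi_{Q}(r)$ with $\mu=\nabla^{Q}\rho$, one plugs $\zeta=r$ and then $\zeta=\mu$ into the Koszul expansion of $0=d^{2}\theta(\xi_{1},\zeta,\xi_{2})$. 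For $\zeta=r$ the terms containing $\iota_{r}d\theta$ drop out and one gets $d\theta([\xi_{1},r],\xi_{2})=d\theta([\xi_{2},r],\xi_{1})$ directly; for $\zeta=\mu$ the identity $d\theta(\mu,\xi)=-\xi\cdot\theta(\rho)$ reduces the remaining terms to $[\xi_{1},\xi_{2}]\cdot\theta(\rho)-\xi_{1}\cdot\xi_{2}\cdot\theta(\rho)+\xi_{2}\cdot\xi_{1}\cdot\theta(\rho)=0$. So the key device is $d^{2}\theta=0$, which packages the Jacobi-type cancellations you were reaching for in a single clean step, and no cross-cancellation between the two summands is needed.

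Your fallback of invoking the general relative BGG machinery to identify the target with $\operatorname{Sym}^{2}(F^{*})\otimes Q$ would be logically valid, but it runs against the explicit aim of the paper, which is to build everything by hand without appealing to that theory.
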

	\begin{proof}
		First, we want to find the image of $\bundleConnection S(\rho)$ for any $\rho\in\Gamma(Q)$. By the defining property of $S$, we know that the upper slot of  must vanish. As for the bottom slot, we only need to apply \Cref{thm:TractorConnection} to $S(\rho) = (\rho, \mu)^{\intercal}$. Simple calculations show that
		\begin{align*}
			\textrm{bottom slot of } \bundleConnection_{\xi_{1}} S(\rho) &= \nabla_{\xi_{1}}^{E}\mu + \theta(\rho)[\xi_{1}, r]_{E},
		\end{align*}
		where $\xi_{1}$ is any section of the bundle $F$ and the last summand is $\theta(\rho)\mathrm{P}(\xi_{1})$. However, it is not immediately clear that this is indeed a section of $F^{*}\otimes F^{*}\otimes Q$. To see that, we need to rewrite the equation  using the canonical isomorphism induced by $\mathcal{L}$ as
		\begin{align*}
			D(\rho)(\xi_{1}, \xi_{2}) &= \mathcal{L}(\nabla^{E}_{\xi_{1}}\mu + \theta(\rho)[\xi_{1}, r], \xi_{2})\\
			&= -d\theta([\xi_{1}, \mu] + \theta(\rho)[\xi_{1}, r], \xi_{2})\pi_{Q}(r).
		\end{align*}
		
		Thus, it only remains to show that the tensor $D(\rho)$ is symmetric with respect to its $F$-entries. For the sake of clarity, we will consider the two components $d\theta([\xi_{1}, \eta], \xi_{2})$ and $\theta(\rho)d\theta([\xi_{1}, r], \xi_{2})$ separately. In either case, however, the argument follows from the identity $d^{2}\theta = 0$. Let $\zeta$ be any section of $TM$ and $\xi_{1}, \xi_{2}$ be two sections of $F$. The usual invariant formula for exterior derivative gives
		\begin{multline}\label{eq:TransformationSymmetric}
			0 = d^{2}\theta(\xi_{1}, \zeta, \xi_{2}) = d\theta([\xi_{1}, \zeta], \xi_{2}) - d\theta([\xi_{2}, \zeta], \xi_{1}) \\ + \xi_{2}\cdot d\theta(\zeta,\xi_{1}) - \xi_{1}\cdot d\theta(\zeta,\xi_{2}) -d\theta([\xi_{1}, \xi_{2}], \zeta).
		\end{multline}
	
		To deal with the component $d\theta([\xi_{1}, r], \xi_{2})$ it is enough that we substitute $r$ for $\zeta$ in \cref{eq:TransformationSymmetric}. We can immediately see that vanishing of the bottom line follows as a consequence of $\iota_{r}d\theta=0$. This leads to $0 = d\theta([\xi_{1}, r], \xi_{2}) - d\theta([\xi_{2}, r], \xi_{1})$ and shows that the component is symmetric.
		
		Analogously, we show that the component $d\theta([\xi_{1}, \mu], \xi_{2})$ is symmetric by substituting $\mu$ for $\zeta$ in \cref{eq:TransformationSymmetric}. The definition of $\mu$ implies that $d\theta(\mu, \xi) = -\xi\cdot \theta(\rho)$, and the equation simplifies to
		\begin{align*}
			0 =& d\theta([\xi_{1}, \mu], \xi_{2}) - d\theta([\xi_{2}, \mu], \xi_{1}) \\ 
				&- \xi_{2}\cdot\xi_{1}\cdot \theta(\rho) + \xi_{1}\cdot \xi_{2} \cdot \theta(\rho) - [\xi_{1}, \xi_{2}] \cdot \theta(\rho).
		\end{align*}
		By expanding the Lie bracket $[\xi_{1}, \xi_{2}]$ we obtain two terms, which cancel out the bottom line of the equation. This leads to the equality $d\theta([\xi_{1}, \mu], \xi_{2}) = d\theta([\xi_{2}, \mu], \xi_{1})$.
	\end{proof}

	\begin{bibdiv}
		\begin{biblist}[\resetbiblist{9}]
			\bib{parabolicGeometries}{book}{
				author={\v{C}ap, Andreas},
				author={Slov\'{a}k, Jan},
				title={Parabolic geometries. I},
				series={Mathematical Surveys and Monographs},
				volume={154},
				note={Background and general theory},
				publisher={American Mathematical Society, Providence, RI},
				date={2009},
				pages={x+628},
				isbn={978-0-8218-2681-2},
				review={\MR{2532439}},
				doi={10.1090/surv/154},
			}
			
			\bib{relativeBGG1}{article}{
				author={\v{C}ap, Andreas},
				author={Sou\v{c}ek, Vladim\'{\i}r},
				title={Relative BGG sequences: I. Algebra},
				journal={J. Algebra},
				volume={463},
				date={2016},
				pages={188--210},
				issn={0021-8693},
				review={\MR{3527545}},
				doi={10.1016/j.jalgebra.2016.06.007},
			}
			
			\bib{relativeBGG2}{article}{
				author={\v{C}ap, Andreas},
				author={Sou\v{c}ek, Vladim\'{\i}r},
				title={Relative BGG sequences; II. BGG machinery and invariant operators},
				journal={Adv. Math.},
				volume={320},
				date={2017},
				pages={1009--1062},
				issn={0001-8708},
				review={\MR{3709128}},
				doi={10.1016/j.aim.2017.09.016},
			}
			
			\bib{lagrangeanContact}{article}{
				author={Takeuchi, Masaru},
				title={Legendrean contact structures on projective cotangent bundles},
				journal={Osaka J. Math.},
				volume={31},
				date={1994},
				number={4},
				pages={837--860},
				issn={0030-6126},
				review={\MR{1315010}},
			}
		\end{biblist}
	\end{bibdiv}	
\end{document}